\numberwithin{equation}{section}
\newtheorem{theorem}{Theorem}[section]
\newtheorem{lemma}[theorem]{Lemma}
\theoremstyle{definition}
\begin{document}
	\title[A generalization of the Bohr inequality]
	{A generalization of the Bohr inequality for bounded analytic functions on simply connected domains and its applications}
	
	\author[Shankey Kumar]{Shankey Kumar$^*$}
	\address{Shankey Kumar, Department of Mathematics,
		Indian Institute of Technology Indore,
		Indore 453552, India}
	\email{shankeygarg93@gmail.com}
	
	
	
	\subjclass[2010]{Primary: 30A10, 30H05, 40G05.}
	\keywords{Bounded analytic functions, Bohr radius, Integral operators, Simply connected domains.\\
		$^*$ The corresponding author}

	\begin{abstract}
		Bohr’s classical theorem and its generalizations are now
		active areas of research and have been the source of investigations in numerous function spaces.
		In this article, we study a generalized Bohr’s inequality for the class of bounded analytic functions defined on the simply connected domain
		$$
		\Omega_{\gamma}:=\bigg\{z\in \mathbb{C}: \bigg|z+\frac{\gamma}{1-\gamma}\bigg|<\frac{1}{1-\gamma}\bigg\}, \,\ \text{for } 0\leq \gamma<1.
		$$
		Part of its applications, we calculate the Bohr-type radii for some known integral operators.
	\end{abstract}
	
	\maketitle
	
	\section{\bf Introduction}
	Let $\mathbb{D}:=\{z\in \mathbb{C}:|z|<1 \}$ be the unit disk in the complex plane $\mathbb{C}$ and $\Omega$ denote a simply connected domain containing $\mathbb{D}$.
	Let $\mathcal{H}(\Omega)$ be the class of all analytic functions defined on $\Omega$. The subclass 
	$\mathcal{B}(\Omega)=\{f\in\mathcal{H}(\Omega): |f(z)|\leq 1\}$,
	of $\mathcal{H}(\Omega)$, is our main consideration in this paper.
	
	In 2010, Fournier and Ruscheweyh \cite{FR10} introduced the Bohr radius for the family $\mathcal{B}(\Omega)$ by computing the positive real number
	$B_\Omega\in(0,1)$ defined as 
	$$
	B_\Omega = \sup\Big\{r \in(0, 1) : \sum_{n=0}^{\infty} |a_n| r^n \leq 1 \text{ for all } f(z)=\sum_{n=0}^{\infty} a_n z^n\in \mathcal{B}(\Omega),\ |z|<1\Big\}.
	$$
	The choice $\Omega= \mathbb{D}$, gives the well-known result of Bohr \cite{Bohr14} in which he provides that $B_\mathbb{D} = 1/3$. Bohr's result in its final form says the following: if $f(z)=\sum_{n=0}^{\infty} a_n z^n\in \mathcal{B}(\mathbb{D})$ then
	\begin{equation}\label{6eq0.1}
		\sum_{n=0}^{\infty} |a_n| r^n\leq 1
	\end{equation}
	in $|z|=r\leq 1/3$. The constant $1/3$ is known as the {\em Bohr radius}.  Bohr's paper indicates that Bohr initially obtained the radius $1/6$, but this was quickly improved to the sharp result by Riesz, Schur, and Wiener, independently. For background information about this inequality and further work related to Bohr's radius, we refer survey article \cite{Abu-M16} and references therein. Moreover, to find certain recent results, we refer to \cite{Abu4,AAL20,BD19,Kaypon18,KS21,LLP2020,LPW2020,LP2019}. 
	
	Many interesting extensions of Bohr's inequality in various settings have been developed by several mathematicians. Some interesting extension of the classical Bohr's radius problem were given by Boas and Khavinson \cite{BK97} in several complex variables. Further, Aizenberg \cite{A00,A07,A12}, Aizenberg et al.\cite{AES05}, and Liu and Ponnusamy \cite{LP21} carried out this work in different settings. Also, the notion of the Bohr radius was generalized in \cite{Abu,Abu2,A07} to include mappings from $\mathbb{D}$ to some other domains in $\mathbb{C}$. Moreover, the Bohr phenomenon for shifted disks and simply connected domains are discussed in \cite{AAH21,EPR21,FR10}. Bohr type inequalities for certain integral operators have been obtained in \cite{Kayponn19,KS2021}.
	
	In \cite{KSS2017}, Kayumov et al. presented the Bohr radius for locally univalent planar harmonic mappings. Part of the recent development in this direction, several improved versions of the Bohr inequality for harmonic mappings are discussed by Evdoridis et al. in \cite{EPR19}. Various improved forms of the classical Bohr inequality were investigated by Kayumov and Ponnusamy in \cite{Kayumov18,Kayponn18}. Recently, a generalized form of the Bohr sum is studied by Kayumov et. al. \cite{Kayponn20}, which is described as follows:
	let $\{\phi_k(r)\}_{k=0}^{\infty}$ be a sequence of non-negative continuous functions in $[0,1)$ such that the series 
	$$
	\phi_0(r)+\sum_{k=1}^{\infty}\phi_k(r)
	$$
	converges locally uniformly for $r\in[0,1)$.
	
	\noindent
	{\bf Theorem A \cite{Kayponn20}.}
	{\em Let $f(z)=\sum_{n=0}^{\infty} a_n z^n \in \mathcal{B}(\mathbb{D})$ and $p\in(0,2]$. If
		$$
		\phi_0(r)>\frac{2}{p}\sum_{k=1}^{\infty}\phi_k(r)
		\quad \mbox{ for $r\in [0,R)$},
		$$
		where $R$ is the minimal positive root of the equation 
		$$
		\phi_0(x)=\frac{2}{p}\sum_{k=1}^{\infty}\phi_k(x),
		$$
		then the following sharp inequality holds:
		$$
		|a_0|^p \phi_0(r)+\sum_{k=1}^{\infty}|a_k| \phi_k(r)\leq \phi_0(r), \quad \mbox{ for all $r\leq R$}.
		$$
		In the case when 
		$$
		\phi_0(x)<\frac{2}{p}\sum_{k=1}^{\infty}\phi_k(x)
		$$
		in some interval $(R,R+\epsilon)$, the number $R$ cannot be improved. If the functions $\phi_k(x)$ ($k\geq 0$) are smooth functions then the last condition is equivalent to the inequality 
		$$
		\phi_0^{'}(R)<\frac{2}{p}\sum_{k=1}^{\infty}\phi_k^{'}(R).
		$$}
	
	In this note our aim is to generalize Theorem A for functions in the class $\mathcal{B}(\Omega_{\gamma})$, where
	$$
	\Omega_{\gamma}:=\bigg\{z\in \mathbb{C}: \bigg|z+\frac{\gamma}{1-\gamma}\bigg|<\frac{1}{1-\gamma}\bigg\}, \,\ \text{for } 0\leq \gamma<1.
	$$
	Note that $\mathbb{D}=\Omega_{0}$. The Bohr inequality for functions in the class $\mathcal{B}(\Omega_{\gamma})$ were extended by Fournier and Ruscheweyh \cite{FR10}.
	
	We arrange this paper as follows. In Section 2, we state and prove our main result that describes Theorem A for the class $\mathcal{B}(\Omega_{\gamma})$. Finally, in Section 3 with the help of our main result, we investigate several versions of Bohr's inequality for functions in $\mathcal{B}(\Omega_{\gamma})$, and the sharp Bohr type inequality for $\beta$-Ces\'{a}ro operator, $\alpha$-Ces\'{a}ro operator  and Bernardi integral operator for functions in $\mathcal{B}(\Omega_{\gamma})$.  
	
	\section{\bf Main result}
	We start this section with a lemma obtained by Evdoridis et al. \cite{EPR21} which plays a crucial role to prove our main result.
	\begin{lemma}\label{8lemma1.1}
		For $\gamma \in [0,1)$, if $f\in\mathcal{B}(\Omega_\gamma)$ has the series representation $f(z)=\sum_{n=0}^{\infty} a_n z^n$ in $\mathbb{D}$  then
		$$
		|a_n|\leq \frac{1-|a_0|^2}{1+\gamma}, \,\ \text{ for } n\geq 1.
		$$
	\end{lemma}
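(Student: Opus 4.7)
The plan is to reduce the estimate to the classical unit-disk setting via a conformal equivalence. Since $\Omega_\gamma$ is itself a disk (centered at $-\gamma/(1-\gamma)$ with radius $1/(1-\gamma)$) containing $0$ and passing through the boundary point $1$, I would introduce the Möbius map
$$
T(z)=\frac{z}{1+\gamma-\gamma z},
$$
which one verifies carries $\Omega_\gamma$ conformally onto $\mathbb{D}$ while fixing $0$ and $1$; its inverse is $T^{-1}(u)=(1+\gamma)u/(1+\gamma u)$. Setting $F(u):=f(T^{-1}(u))$, the hypothesis $f\in\mathcal{B}(\Omega_\gamma)$ immediately yields $F\in\mathcal{B}(\mathbb{D})$ with $F(0)=f(0)=a_0$.

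Writing $F(u)=\sum_{k=0}^{\infty}c_k u^k$, the classical Schur--Wiener coefficient inequality for bounded analytic functions on the unit disk gives $|c_k|\leq 1-|c_0|^2=1-|a_0|^2$ for every $k\geq 1$. The next step is to recover the Taylor coefficients $a_n$ of $f$ at $0$ from the identity $f(z)=F(T(z))$. Expanding $T(z)^k$ through the geometric series gives
$$
T(z)^k=\frac{z^k}{(1+\gamma)^k}\sum_{m=0}^{\infty}\binom{k+m-1}{m}\left(\frac{\gamma}{1+\gamma}\right)^{m} z^m,
$$
so reading off the coefficient of $z^n$ in $\sum_{k=0}^{\infty}c_k T(z)^k$ produces, for $n\geq 1$,
$$
a_n=\frac{1}{(1+\gamma)^n}\sum_{k=1}^{n}c_k\binom{n-1}{n-k}\gamma^{n-k}.
$$

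Finally, applying the triangle inequality together with the Wiener bound and simplifying via the binomial identity $\sum_{j=0}^{n-1}\binom{n-1}{j}\gamma^{j}=(1+\gamma)^{n-1}$ (after the substitution $j=n-k$) collapses the sum to
$$
|a_n|\leq\frac{(1-|a_0|^2)(1+\gamma)^{n-1}}{(1+\gamma)^n}=\frac{1-|a_0|^2}{1+\gamma},
$$
which is precisely the claimed estimate.

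The step requiring care is the choice of the conformal map: while any Möbius equivalence $\Omega_\gamma\to\mathbb{D}$ sending $0$ to $0$ would provide a Wiener bound on the transported coefficients, only the normalization $T(1)=1$ produces the clean geometric-series expansion of $T(z)^k$ whose contributions telescope via the binomial theorem. Rotations of $T$ would merely multiply each $c_k$ by a unimodular factor and leave the final estimate unchanged, so the essential work is the binomial calculation above rather than the choice of map up to rotation.
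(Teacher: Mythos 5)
Your argument is correct. Note, however, that the paper does not prove this lemma at all: it is imported verbatim from Evdoridis, Ponnusamy and Rasila \cite{EPR21}, so there is no in-paper proof to compare against; what you have written is a self-contained derivation along the standard lines of the cited source. The key facts all check out: $T(z)=z/(1+\gamma-\gamma z)$ does fix $0$ and $1$ and carries $\partial\Omega_\gamma$ (the circle $(1-\gamma)|z|^2+2\gamma\operatorname{Re}z-(1+\gamma)=0$) onto $\partial\mathbb{D}$, so $F=f\circ T^{-1}\in\mathcal{B}(\mathbb{D})$ with $c_0=a_0$; Wiener's inequality gives $|c_k|\le 1-|a_0|^2$; the coefficient of $z^n$ in $T(z)^k$ is $\binom{n-1}{n-k}\gamma^{n-k}(1+\gamma)^{-n}$ for $1\le k\le n$; and $\sum_{k=1}^{n}\binom{n-1}{n-k}\gamma^{n-k}=(1+\gamma)^{n-1}$ collapses the bound to $(1-|a_0|^2)/(1+\gamma)$. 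Two small points worth making explicit: the termwise extraction of the coefficient of $z^n$ from $\sum_k c_kT(z)^k$ is legitimate because $T(\mathbb{D})\subset\mathbb{D}$ makes the series locally uniformly convergent on $\mathbb{D}$ and only the finitely many indices $k\le n$ contribute to $z^n$; and the case of constant $f$ (where $|a_0|$ may equal $1$) is trivial since then $a_n=0$ for $n\ge1$.
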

	Now, we are able to prove our main result.
	\begin{theorem}\label{8theorem1.1}
		Let $\{\varphi_k(r)\}_{k=0}^{\infty}$ be a sequence of non-negative continuous functions in $[0,1)$ such that the series 
		$$
		\varphi_0(r)+\sum_{k=1}^{\infty}\varphi_k(r)
		$$
		converges locally uniformly with respect to $r\in[0,1)$. 
		For $\gamma \in [0,1)$, let $f\in\mathcal{B}(\Omega_\gamma)$ have the series representation $f(z)=\sum_{n=0}^{\infty} a_n z^n$ in $\mathbb{D}$ and $p\in(0,2]$. If
		\begin{equation}\label{6eq1.1}
			(1+\gamma)\varphi_0(r)>\frac{2}{p}\sum_{k=1}^{\infty}\varphi_k(r)
		\end{equation}
		then the following sharp inequality holds:
		$$
		A_f(\varphi,p,r):=|a_0|^p \varphi_0(r)+\sum_{k=1}^{\infty}|a_k| \varphi_k(r)\leq \varphi_0(r), \mbox{ for all $|z|=r\leq R_\gamma$},
		$$
		where $R_\gamma$ is the minimal positive root of the equation 
		$$
		(1+\gamma)\varphi_0(x)=\frac{2}{p}\sum_{k=1}^{\infty}\varphi_k(x).
		$$
		In the case when 
		$$
		(1+\gamma)\varphi_0(x)<\frac{2}{p}\sum_{k=1}^{\infty}\varphi_k(x)
		$$
		in some interval $(R_\gamma,R_\gamma+\epsilon)$, the number $R_\gamma$ cannot be improved. If the functions $\varphi_k(x)$ ($k\geq 0$) are smooth then the last condition is equivalent to the inequality 
		$$
		(1+\gamma)\varphi_0^{'}(x)<\frac{2}{p}\sum_{k=1}^{\infty}\varphi_k^{'}(x).
		$$
	\end{theorem}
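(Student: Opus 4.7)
The plan is to mimic the proof of Theorem A, but replacing the classical Schwarz--Pick type coefficient bound by the $\Omega_\gamma$-coefficient bound furnished by Lemma~\ref{8lemma1.1}. Writing $a=|a_0|\in[0,1]$ and $\Phi(r):=\sum_{k=1}^{\infty}\varphi_k(r)$, I would apply Lemma~\ref{8lemma1.1} termwise to obtain
\[
A_f(\varphi,p,r)\;\leq\; a^{p}\varphi_0(r)+\frac{1-a^{2}}{1+\gamma}\,\Phi(r),
\]
so it suffices to prove that the auxiliary function
\[
H(a):=a^{p}\varphi_0(r)+\frac{1-a^{2}}{1+\gamma}\,\Phi(r)-\varphi_0(r)
\]
is nonpositive on $[0,1]$ whenever $r\leq R_\gamma$.

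Next, I would show $H(a)\leq 0$ by a monotonicity argument. Clearly $H(1)=0$, and for $a\in(0,1]$
\[
H'(a)=a\Bigl(p\,a^{p-2}\varphi_0(r)-\tfrac{2}{1+\gamma}\Phi(r)\Bigr).
\]
Because $p\in(0,2]$ we have $a^{p-2}\geq 1$ for $a\in(0,1]$, so that $p\,a^{p-2}\varphi_0(r)\geq p\,\varphi_0(r)$. Under hypothesis \eqref{6eq1.1}, $p\,\varphi_0(r)\geq \frac{2}{1+\gamma}\Phi(r)$, hence $H'(a)\geq 0$ on $(0,1]$ and therefore $H(a)\leq H(1)=0$. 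The fact that the critical equation $(1+\gamma)\varphi_0(x)=\frac{2}{p}\Phi(x)$ governs exactly the threshold for this monotonicity explains why $R_\gamma$ is the right radius.

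For sharpness, I would exhibit an extremal family $f_a\in\mathcal{B}(\Omega_\gamma)$, $a\in[0,1)$, saturating Lemma~\ref{8lemma1.1}; one natural candidate is $f_a(z)=\frac{a-\psi_\gamma(z)}{1-a\,\psi_\gamma(z)}$ where $\psi_\gamma$ is the appropriately normalized conformal map of $\Omega_\gamma$ onto $\mathbb{D}$ (so that its Taylor coefficients on $\mathbb{D}$ produce the $(1+\gamma)^{-1}$ factor). Plugging this into $A_{f_a}(\varphi,p,r)-\varphi_0(r)$ and letting $a\to 1^-$ should drive the quantity to $\frac{1}{1+\gamma}\Phi(r)-\varphi_0(r)$; by the strict reversed inequality assumed on $(R_\gamma,R_\gamma+\epsilon)$, this is positive for $r$ slightly larger than $R_\gamma$, proving that $R_\gamma$ cannot be improved. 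The smoothness reformulation in terms of $\varphi_k'(R_\gamma)$ is then a standard one-line consequence of the definition of $R_\gamma$ as the minimal positive root.

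The main obstacle, as anticipated in the classical case, is the sharpness step: the monotonicity argument is essentially routine once Lemma~\ref{8lemma1.1} is in hand, but verifying that the extremal function indeed lies in $\mathcal{B}(\Omega_\gamma)$ (rather than merely in $\mathcal{B}(\mathbb{D})$) and that its coefficients realise the bound $|a_n|=(1-a^2)/(1+\gamma)$ in the limit $a\to 1^-$ requires care, and is where the specific geometry of the disk $\Omega_\gamma$ genuinely enters the argument.
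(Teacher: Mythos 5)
Your first half is correct and is essentially the paper's argument: you apply Lemma~\ref{8lemma1.1} termwise and reduce everything to the scalar function $H(a)=a^p\varphi_0(r)+\frac{1-a^2}{1+\gamma}\Phi(r)-\varphi_0(r)$, and your monotonicity argument ($H'\geq 0$ on $(0,1]$ together with $H(1)=0$) is a valid substitute for the paper's use of the elementary inequality $\frac{1-x^p}{1-x^2}\geq\frac{p}{2}$ for $x\in[0,1)$, $p\in(0,2]$ --- the two are the same estimate in different clothing. Your extremal family is also the right one: with $\psi_\gamma(z)=\gamma+(1-\gamma)z$, your $f_a$ is exactly the paper's $g_\gamma(z)=\frac{a-\gamma-(1-\gamma)z}{1-a\gamma-a(1-\gamma)z}$.

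The genuine gap is in the sharpness step. As $a\to1^-$ the coefficients of $f_a$ satisfy $a_0\to1$ and $a_k\to0$ for $k\geq1$, so $A_{f_a}(\varphi,p,r)-\varphi_0(r)\to 0$, \emph{not} to $\frac{1}{1+\gamma}\Phi(r)-\varphi_0(r)$ as you claim. Moreover, even if that were the limit, its positivity on $(R_\gamma,R_\gamma+\epsilon)$ would require $(1+\gamma)\varphi_0<\Phi$, which is strictly stronger than the defining condition $(1+\gamma)\varphi_0<\frac{2}{p}\Phi$ when $p<2$; your limiting expression has lost all dependence on $p$, which cannot be right since $R_\gamma$ depends on $p$. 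What is actually needed is the first-order expansion in $1-a$: using $\frac{a-\gamma}{1-a\gamma}=1-\frac{(1+\gamma)(1-a)}{1-\gamma}+O((1-a)^2)$ and $\frac{1-a^2}{a(1-a\gamma)}\bigl(\frac{a(1-\gamma)}{1-a\gamma}\bigr)^k=\frac{2(1-a)}{1-\gamma}+o(1-a)$ (summed against $\varphi_k(r)$, justified by the locally uniform convergence of the series), one obtains
\[
A_{f_a}(\varphi,p,r)=\varphi_0(r)+\frac{1-a}{1-\gamma}\Bigl[2\Phi(r)-p(1+\gamma)\varphi_0(r)\Bigr]+O\bigl((1-a)^2\bigr),
\]
and the bracket is positive precisely when $(1+\gamma)\varphi_0(r)<\frac{2}{p}\Phi(r)$, i.e.\ on $(R_\gamma,R_\gamma+\epsilon)$; hence for $a$ sufficiently close to $1$ the sum exceeds $\varphi_0(r)$ there. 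Without extracting this first-order coefficient, your argument does not establish that $R_\gamma$ is optimal.
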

	\begin{proof} Let $|a_0|<1$.
		Given that $f\in\mathcal{B}(\Omega_\gamma)$. Then by using Lemma \ref{8lemma1.1} we obtain
		\begin{align*}
			A_f(\varphi,p,r)&\leq |a_0|^p \varphi_0(r)+\frac{1-|a_0|^2}{1+\gamma}\sum_{k=1}^{\infty} \varphi_k(r)\\
			&= \varphi_0(r)+\frac{1-|a_0|^2}{1+\gamma}\Bigg[\sum_{k=1}^{\infty} \varphi_k(r)-(1+\gamma)\frac{1-|a_0|^p}{1-|a_0|^2}\varphi_0(r)\Bigg].
		\end{align*}	
		To proceed further in the proof, we use the following inequality proved in \cite{Kayponn20}
		$$
		\frac{1-x^p}{1-x^2}\geq\frac{p}{2}, \ \text{ for all } x \in [0,1)  \text{ and } p\in(0,2].
		$$
		Then we have
		$$
		A_f(\varphi,p,r)\leq \varphi_0(r)+\frac{1-|a_0|^2}{1+\gamma}\Bigg[\sum_{k=1}^{\infty} \varphi_k(r)-(\gamma+1)\frac{p}{2}\varphi_0(r)\Bigg].
		$$
		The equation \eqref{6eq1.1} provides us
		$$
		A_f(\varphi,p,r)\leq \varphi_0(r),\quad \mbox{for all $ r\leq R_\gamma$}.
		$$
		Now, let us prove the sharpness part. For $a\in [0,1)$, we consider the function
		$$
		g_\gamma(z)=\frac{a-\gamma-(1-\gamma)z}{1-a\gamma-a(1-\gamma)z}=\frac{a-\gamma}{1-a\gamma}-\sum_{k=1}^{\infty}\frac{1-a^2}{a(1-a\gamma)} \bigg(\frac{a(1-\gamma)}{1-a\gamma}\bigg)^k z^{k}, \ z\in\mathbb{D}
		$$
		which maps $\Omega_\gamma$ univalently onto $\mathbb{D}$.
		Let $\gamma$ be such that $0\leq\gamma<a$. Then we obtain
		\begin{align*}
			A_{g_\gamma}(\varphi,p,r)=& \bigg(\frac{a-\gamma}{1-a\gamma} \bigg)^p \varphi_0(r)+\sum_{k=1}^{\infty}\frac{1-a^2}{a(1-a\gamma)} \bigg(\frac{a(1-\gamma)}{1-a\gamma}\bigg)^k \varphi_k(r)\\
			=& \varphi_0(r)+\frac{(1-a)}{(1-\gamma)}\Bigg[2\sum_{k=1}^{\infty} \varphi_k(r)-p(1+\gamma)\varphi_0(r)\Bigg]\\
			&+(1-a)\Bigg[\sum_{k=1}^{\infty}\frac{1+a}{a(1-a\gamma)} \bigg(\frac{a(1-\gamma)}{1-a\gamma}\bigg)^k \varphi_k(r)-\frac{2}{(1-\gamma)}\sum_{k=1}^{\infty} \varphi_k(r)\Bigg]\\
			&+\Bigg[p(1-a)\frac{1+\gamma}{1-\gamma}+\bigg(\frac{a-\gamma}{1-a\gamma} \bigg)^p-1\Bigg]\varphi_0(r)\\
			=&\varphi_0(r)+\frac{(1-a)}{(1-\gamma)}\Bigg[2\sum_{k=1}^{\infty} \varphi_k(r)-p(1+\gamma)\varphi_0(r)\Bigg]+O((1-a)^2)
		\end{align*}
		as $a$ tends to $1^-$. Since, we have
		$$
		2\sum_{k=1}^{\infty} \varphi_k(r)>p(1+\gamma)\varphi_0(r)
		$$
		for $r\in(R_\gamma,R_\gamma+\epsilon)$, the radius can not be improved. This completes the proof.
	\end{proof}
	\section{\bf Applications}
	As applications of Theorem \ref{8theorem1.1}, the following results are the counterparts of Bohr's theorem in various settings:
	\subsection{}
	Let $\varphi_0=1$, $\varphi_n=r^n$, $n\geq N\in\mathbb{N}$ and $\varphi_n=0$, $1\leq n< N$ in Theorem \ref{8theorem1.1}. Then we obtain
	$$
	|a_0|^p+\sum_{n=N}^{\infty} |a_n| r^n\leq 1, \text{ for } r\leq R_1^{\gamma,N}(p),
	$$
	where $R_1^{\gamma,N}(p)$ is the smallest positive root of the equation $2x^N-p(1+\gamma)(1-x)=0$. The radius $R_1^{\gamma,N}(p)$ is best possible. The case $N=1$ and $p=1$ produce  \cite[Theorem 1]{FR10}.
	
	\subsection{}
	The choices $\varphi_{2n}=r^{2n}$ and $\varphi_{2n+1}=0$ ($n\geq 0$) in Theorem $\ref{8theorem1.1}$ give
	$$
	|a_0|^p+\sum_{n=1}^{\infty} |a_{2n}| r^{2n}\leq 1, \text{ for } r\leq R_2^\gamma(p),
	$$
	where $R_2^\gamma(p)$ is the positive root of the equation $2x^2-p(1+\gamma)(1-x^2)=0$. The radius $R_2^\gamma(p)$ is best possible.
	
	\subsection{}
	Let us consider $\varphi_0=1$, $\varphi_{2n}=0$ and $\varphi_{2n-1}=r^{2n-1}$ ($n\geq 1$) in Theorem \ref{8theorem1.1}. Then we have
	$$
	|a_0|^p+\sum_{n=1}^{\infty} |a_{2n-1}| r^{2n-1}\leq 1, \text{ for } r\leq R_3^\gamma(p),
	$$
	where $R_3^\gamma(p)$ is the minimal positive root of the equation $2x-p(1+\gamma)(1-x^2)=0$. The radius $R_3^\gamma(p)$ is best possible. 
	
	\subsection{}
	After considering $\varphi_0=1$, $\varphi_n=(n+1)r^n$, $n\geq N\in\mathbb{N}$ and $\varphi_n=0$, $1\leq n< N$, in Theorem \ref{8theorem1.1}, we obtain
	$$
	|a_0|^p+\sum_{n=N}^{\infty}(n+1) |a_n| r^n\leq 1, \text{ for } r\leq R_4^{\gamma,N}(p),
	$$
	where $R_4^{\gamma,N}(p)$ is the smallest positive root of the equation $2x^N(1+N-Nx)-p(1+\gamma)(1-x)^2=0$. The radius $R_4^{\gamma,N}(p)$ is best possible.
	
	\subsection{}
	It is not difficult to calculate that
	$$
	\sum_{n=N}^{\infty}n r^n=\frac{r^N[N(1-r)+r]}{(1-r)^2}.
	$$
	Then, for $\varphi_0=1$, $\varphi_n=nr^n$, $n\geq N\in\mathbb{N}$ and $\varphi_n=0$, $1\leq n< N$, Theorem \ref{8theorem1.1} obtains
	$$
	|a_0|^p+\sum_{n=N}^{\infty}n |a_n| r^n\leq 1, \text{ for } r\leq R_5^{\gamma,N}(p),
	$$
	where $R_5^{\gamma,N}(p)$ is the positive root of the equation $2x^N[N(1-x)+x]=p(1+\gamma)(1-x)^2$. The radius $R_5^{\gamma,N}(p)$ is best possible.
	
	Also, we have
	$$
	\sum_{n=N}^{\infty}n^2 r^n=\frac{r^N[(r+N)^2+r+N^2r^2-2Nr(r+N)]}{(1-r)^3}.
	$$
	Then putting $\varphi_0=1$, $\varphi_n=n^2r^n$, $n\geq N\in\mathbb{N}$ and $\varphi_n=0$, $1\leq n< N$, in Theorem \ref{8theorem1.1}, one obtains
	$$
	|a_0|^p+\sum_{n=N}^{\infty}n^2 |a_n| r^n\leq 1, \text{ for } r\leq R_6^{\gamma,N}(p),
	$$
	where $R_6^{\gamma,N}(p)$ is the positive root of the equation 
	$$
	2x^N[(x+N)^2+x+N^2x^2-2Nx(x+N)]=p(1+\gamma)(1-x)^3.
	$$ 
	The radius $R_6^{\gamma,N}(p)$ is best possible.

	\subsection{The $\beta$-Ces\'{a}ro operator} The {\em $\beta$-Ces\'aro operator} ($\beta>0$) \cite{KS2020,KS2021} is defined by
	$$
	T_\beta[f](z):=\sum_{n=0}^{\infty} \bigg(\frac{1}{n+1}\sum_{k=0}^{n} \frac{\Gamma{(n-k+\beta)}}
	{\Gamma{(n-k+1)}\Gamma(\beta)}a_{k}\bigg) z^n
	=\int_{0}^{1} \frac{f(tz)}{(1-tz)^\beta} dt, \ z\in \mathbb{D}.
	$$
	The $\beta$-Ces\'aro operator $T_\beta$ ($\beta>0$) is a natural 
	generalization of the classical Ces\'{a}ro operator $T_1$.
	For $f\in \mathcal{B}(\mathbb{D})$ and $\beta>0$, an easy computation of the integral in absolute value 
	gives us the sharp inequality
	$$ 
	|T_\beta[g](z)|\leq 
	\begin{cases}\cfrac{1}{r}\bigg[ \cfrac{1-(1-r)^{1-\beta}}{1-\beta}\bigg], & \text{if } \beta\neq 1,\\[5mm] 
		\cfrac{1}{r}\log \cfrac{1}{1-r}, & \text{ if } \beta =1, 
	\end{cases}
	$$
	for each $|z|=r<1$. 
	
	As an application of Theorem \ref{8theorem1.1} we obtain the Bohr type inequality for the $\beta$-Ces\'{a}ro operator in the following theorem. 
	
	\begin{theorem}\label{8theorem3.1}
		For $f(z)=\sum_{n=0}^{\infty}a_n z^n \in \mathcal{B}(\Omega_\gamma)$ and $0<\beta \neq 1$, we have
		$$
		\sum_{n=0}^{\infty} \bigg(\frac{1}{n+1}\sum_{k=0}^{n} \frac{\Gamma{(n-k+\beta)}}
		{\Gamma{(n-k+1)}\Gamma(\beta)}|a_{k}|\bigg) r^n
		\leq \frac{1}{r}\bigg[\frac{1-(1-r)^{1-\beta}}{1-\beta}\bigg],
		$$
		for $r\leq R_\gamma(\beta)$, where $R_\gamma(\beta)$ is the smallest positive root of the equation
		$$
		\frac{(3+\gamma)[1-(1-x)^{1-\beta}]}{1-\beta} - \frac{2[(1-x)^{-\beta}-1]}{\beta} =0.
		$$
		The radius $R_\gamma(\beta)$ cannot be improved.
	\end{theorem}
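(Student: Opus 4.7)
The plan is to apply Theorem \ref{8theorem1.1} with $p=1$ and a carefully chosen sequence $\{\varphi_k\}$. First I would swap the order of summation on the left-hand side of the claimed inequality so that it takes the form $\sum_{k=0}^{\infty}|a_k|\varphi_k(r)$, with
$$
\varphi_k(r)=\sum_{n=k}^{\infty}\frac{1}{n+1}\,\frac{\Gamma(n-k+\beta)}{\Gamma(n-k+1)\Gamma(\beta)}\,r^n.
$$
The key observation is that $\varphi_k(r)$ is precisely the value at $z=r$ of $T_\beta[z^k](z)$, so the integral representation of $T_\beta$ gives the clean form
$$
\varphi_k(r)=\int_{0}^{1}\frac{(tr)^k}{(1-tr)^\beta}\,dt=\frac{1}{r}\int_{0}^{r}\frac{u^k}{(1-u)^\beta}\,du.
$$
From this it is immediate that each $\varphi_k$ is non-negative, smooth on $[0,1)$, and that $\varphi_0+\sum_{k\ge 1}\varphi_k$ converges locally uniformly on $[0,1)$, so the hypotheses of Theorem \ref{8theorem1.1} are met.

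Next I would compute $\varphi_0(r)=\tfrac{1-(1-r)^{1-\beta}}{r(1-\beta)}$, which recovers exactly the right-hand side of the target inequality. Summing the geometric series $\sum_{k\ge 1}u^k=u/(1-u)$ inside the integral yields
$$
\sum_{k=1}^{\infty}\varphi_k(r)=\frac{1}{r}\int_{0}^{r}\frac{u}{(1-u)^{\beta+1}}\,du,
$$
which after an elementary antiderivative (splitting $u=1-(1-u)$) becomes
$$
\sum_{k=1}^{\infty}\varphi_k(r)=\frac{1}{r}\left[\frac{(1-r)^{-\beta}-1}{\beta}+\frac{(1-r)^{1-\beta}-1}{1-\beta}\right].
$$

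Finally I would verify that the key inequality $(1+\gamma)\varphi_0(r)>2\sum_{k\ge 1}\varphi_k(r)$ required by Theorem \ref{8theorem1.1} with $p=1$ reduces, after moving the common $\tfrac{1-\beta}{\cdot}$ term to the left side, to
$$
\frac{(3+\gamma)\bigl[1-(1-r)^{1-\beta}\bigr]}{1-\beta}>\frac{2\bigl[(1-r)^{-\beta}-1\bigr]}{\beta},
$$
which is exactly the inequality whose boundary defines $R_\gamma(\beta)$. Theorem \ref{8theorem1.1} then delivers the desired bound $A_f(\varphi,1,r)\le \varphi_0(r)$ for $r\le R_\gamma(\beta)$, and the sharpness statement is inherited directly since the $\varphi_k$ are smooth.

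The main obstacle I anticipate is purely algebraic: the bookkeeping in step three, where three $\beta$-dependent terms must be combined to produce the single expression $(3+\gamma)\bigl[1-(1-r)^{1-\beta}\bigr]/(1-\beta)$. One has to observe that the $-\tfrac{2[(1-r)^{1-\beta}-1]}{1-\beta}$ contribution from the sum adds constructively with $(1+\gamma)\varphi_0$ (both carry the same numerator up to sign), upgrading the coefficient from $1+\gamma$ to $3+\gamma$. Once this identification is made, everything else is a direct application of Theorem \ref{8theorem1.1}, and no additional analytic estimates are needed.
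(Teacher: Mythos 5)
Your proposal is correct and follows essentially the same route as the paper: swap the order of summation, set $\varphi_k(r)=\sum_{n=k}^{\infty}\frac{1}{n+1}\frac{\Gamma(n-k+\beta)}{\Gamma(n-k+1)\Gamma(\beta)}r^n$, compute $\varphi_0$ and $\sum_{k\ge1}\varphi_k$ in closed form, and invoke Theorem \ref{8theorem1.1} with $p=1$. The only cosmetic difference is that you evaluate $\sum_{k\ge1}\varphi_k$ by summing the geometric series under the integral sign, whereas the paper obtains the full sum $\sum_{k\ge0}\varphi_k=\frac{1}{\beta r}[(1-r)^{-\beta}-1]$ by feeding $f(z)=1/(1-z)$ into $T_\beta$ --- the same computation in a different guise, leading to the identical defining equation for $R_\gamma(\beta)$.
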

	\begin{proof}
		It is easy to observe that 
		$$
		\sum_{n=0}^{\infty} \bigg(\frac{1}{n+1}\sum_{k=0}^{n} \frac{\Gamma{(n-k+\beta)}}
		{\Gamma{(n-k+1)}\Gamma(\beta)}|a_{k}|\bigg) r^n = \sum_{n=0}^{\infty}|a_{n}| \bigg(\sum_{k=n}^{\infty} \frac{1}{k+1} \frac{\Gamma{(k-n+\beta)}}
		{\Gamma{(k-n+1)}\Gamma(\beta)}r^k\bigg). 
		$$
		Let
		$$
		\varphi_n(r)= \sum_{k=n}^{\infty} \frac{1}{k+1} \frac{\Gamma{(k-n+\beta)}}
		{\Gamma{(k-n+1)}\Gamma(\beta)}r^k.
		$$
		If we consider $f(z)=1/(1-z)$ in the definition of the operator $T_\beta$, then it is easy to find that 
		$$
		\sum_{n=0}^{\infty}\varphi_n(r)=\frac{1}{\beta r}[(1-r)^{-\beta}-1].
		$$
		Also, we get that  
		$$
		\varphi_0(x)= \frac{1}{ (1-\beta)x}\big[1-(1-x)^{1-\beta}\big].
		$$
		Then by using Theorem \ref{8theorem1.1}, we conclude the proof.
	\end{proof}
	The value $\gamma=0$ in Theorem \ref{8theorem3.1} gives a result of Kumar and Sahoo \cite{KS2021}. The following limiting case $\beta\to 1$ of Theorem \ref{8theorem3.1} is recently studied in \cite{AG21}.
	\begin{theorem}\label{8theorem3.2}
		If $f(z)=\sum_{n=0}^{\infty}a_n z^n\in \mathcal{B}(\Omega_\gamma)$, then
		$$
		\sum_{n=0}^{\infty}  \bigg(\frac{1}{n+1} \sum_{k=0}^{n} 
		|a_k| \bigg)r^n\leq \frac{1}{r} \log \frac{1}{1-r}
		$$
		for $r\leq R(\gamma)$. Here the number $R(\gamma)$ is the smallest positive root of the equation
		$$
		2x- (3+\gamma)(1-x) \log \frac{1}{1-x}=0
		$$
		that cannot be improved.
	\end{theorem}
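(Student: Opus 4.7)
The plan is to follow the template of the proof of Theorem \ref{8theorem3.1} with the substitution $\beta=1$ performed at the outset, so that the logarithm appears directly rather than as a limit. First I would interchange the order of summation in the Ces\`aro-type double series to put it in the form needed by Theorem \ref{8theorem1.1}, namely
\[
\sum_{n=0}^{\infty}\Bigl(\tfrac{1}{n+1}\sum_{k=0}^{n}|a_k|\Bigr)r^{n}=\sum_{n=0}^{\infty}|a_n|\,\varphi_n(r),\qquad \varphi_n(r):=\sum_{k=n}^{\infty}\frac{r^k}{k+1}.
\]
In particular $\varphi_0(r)=\frac{1}{r}\log\frac{1}{1-r}$, which matches the right-hand side that needs to appear in the inequality.

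Next I would compute $\sum_{n=0}^{\infty}\varphi_n(r)$ by switching the order of summation once more; each term $r^k/(k+1)$ is counted exactly $k+1$ times, giving $\sum_{n=0}^{\infty}\varphi_n(r)=\frac{1}{1-r}$. Equivalently, this is what one obtains by applying the classical Ces\`aro operator $T_1$ to $f(z)=1/(1-z)$ and evaluating at real $r$, which is the $\beta\to1$ analogue of the trick used in Theorem \ref{8theorem3.1}. Hence
\[
\sum_{k=1}^{\infty}\varphi_k(r)=\frac{1}{1-r}-\frac{1}{r}\log\frac{1}{1-r}.
\]

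Now apply Theorem \ref{8theorem1.1} with $p=1$ and these $\varphi_k$. The hypothesis $(1+\gamma)\varphi_0(r)>2\sum_{k\ge 1}\varphi_k(r)$ becomes, after multiplying through by $r(1-r)$,
\[
(1+\gamma)(1-r)\log\frac{1}{1-r}>2r-2(1-r)\log\frac{1}{1-r},
\]
that is, $(3+\gamma)(1-r)\log\frac{1}{1-r}>2r$. The threshold $R(\gamma)$ is therefore exactly the smallest positive root of $2x-(3+\gamma)(1-x)\log\frac{1}{1-x}=0$, as stated. Continuity of both sides at $0$ (with $\log\frac{1}{1-x}\sim x$) shows such a root exists in $(0,1)$, and smoothness gives the strict sign reversal on $(R(\gamma),R(\gamma)+\epsilon)$ required for the sharpness clause of Theorem \ref{8theorem1.1}. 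That clause then yields the bound and its sharpness at once.

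I do not expect a genuine obstacle: the entire argument is a specialization of Theorem \ref{8theorem1.1}, and the two non-obvious ingredients (the identification of $\varphi_0$ and of $\sum_{n\ge 0}\varphi_n$) are the same Abel-type summation-swaps that are already used in the $\beta\neq 1$ case. The only point worth double-checking is that $\varphi_0$, $\varphi_k$ are non-negative and continuous on $[0,1)$ with locally uniform convergence of $\sum_k\varphi_k=\tfrac{1}{1-r}$, which is immediate, and that the root $R(\gamma)$ coming from Theorem \ref{8theorem1.1} coincides with the one stated, which the manipulation above confirms.
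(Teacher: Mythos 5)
Your proposal is correct and is essentially the paper's approach: the paper obtains this statement as the limiting case $\beta\to 1$ of Theorem \ref{8theorem3.1}, whose proof uses exactly the weights $\varphi_n(r)=\sum_{k\ge n}r^k/(k+1)$, the identities $\varphi_0(r)=\frac{1}{r}\log\frac{1}{1-r}$ and $\sum_{n\ge 0}\varphi_n(r)=\frac{1}{1-r}$, and an appeal to Theorem \ref{8theorem1.1} with $p=1$. Your direct $\beta=1$ computation merely makes that limiting argument explicit and matches the stated root equation.
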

	The case $\gamma=0$ in Theorem \ref{8theorem3.2} provides a result of \cite{Kayponn19}.
	\subsection{The $\alpha$-Ces\'{a}ro operator}
	For $\alpha \in \mathbb{C}$ with ${\rm Re}\ \alpha>-1$, the $\alpha$-Ces\'{a}ro operator (see \cite{S94}), considered on the class of analytic functions in $\mathbb{D}$, is defined as
	$$
	\mathcal{C}^\alpha f(z)= \sum_{n=0}^{\infty}\bigg(\frac{1}{A_{n}^{\alpha+1}} \sum_{k=0}^{n} A_{n-k}^{\alpha} a_k \bigg)z^n=(\alpha+1)\int_{0}^{1}f(tz) \frac{(1-t)^\alpha}{(1-tz)^{\alpha+1}}dt,
	$$
	where $f(z)=\sum_{k=0}^{\infty}a_k z^k$ and 
	$
	A_k^\alpha=(\alpha+1)_n/(1)_n.
	$ 
	It follows that 
	$$
	\sum_{k=0}^{\infty} A_{k}^{\alpha} z^k=\frac{1}{(1-z)^{1+\alpha}}.
	$$ 
	Also, after comparing the coefficient of $z^n$ on both sides of the identity
	$$
	\frac{1}{(1-z)^{1+\alpha}}. \frac{1}{(1-z)}=\frac{1}{(1-z)^{2+\alpha}}
	$$
	we obtain
	$$
	A_n^{1+\alpha}=\sum_{k=0}^{n} A_k^{\alpha}, \text{ i.e., } \frac{1}{A_n^{\alpha+1}}\sum_{k=0}^{n} A_{n-k}^{\alpha}=1.
	$$
	Note that $T_1=\mathcal{C}^0$.
	
	The following bound on the operator $\mathcal{C}^\alpha$ over the class $\mathcal{B}(\mathbb{D})$ proved in \cite{Kayponn20}.
	\begin{theorem}
		For $f\in \mathcal{B}(\mathbb{D})$ and $\alpha>-1$, we have
		$$
		|\mathcal{C}^\alpha f(z)|\leq \frac{1+\alpha}{r^{\alpha}} \int_{0}^{r} \frac{t^\alpha}{1-t} dt.
		$$
	\end{theorem}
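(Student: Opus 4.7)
The plan is to estimate $\mathcal{C}^\alpha f(z)$ directly from its integral representation, combining the modulus bound on $f$ with a change of variable to cast the resulting real integral into the target form.

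First, I would apply the triangle inequality to
$$\mathcal{C}^\alpha f(z) = (\alpha+1)\int_0^1 f(tz)\frac{(1-t)^\alpha}{(1-tz)^{\alpha+1}}\,dt,$$
using the hypothesis $|f(tz)| \le 1$ (since $f \in \mathcal{B}(\mathbb{D})$ and $|tz|<1$) together with the elementary inequality $|1-tz| \ge 1 - t|z| = 1 - tr$ for $|z| = r$. This yields a one-variable estimate
$$|\mathcal{C}^\alpha f(z)| \le (\alpha+1)\int_0^1 \frac{(1-t)^\alpha}{(1-tr)^{\alpha+1}}\,dt,$$
which no longer involves $f$.

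The second step is to match this with the form stated in the theorem by a change of variable. I would first substitute $s = tr$ to send the upper limit to $r$ and extract the appropriate power of $r$ from the prefactor, producing an integrand proportional to $(r-s)^\alpha/(1-s)^{\alpha+1}$ on $[0,r]$. Then I would apply a M\"obius-type substitution, specifically $v = (r-s)/(1-s)$, which maps $[0,r]$ bijectively onto itself; after computing the Jacobian $ds = -(1-s)^2/(1-r)\,dv$, the $(1-r)$ factors arising from the Jacobian cancel against the $(1-r)^\alpha$ coming from $(r-s)^\alpha$ and the $(1-r)^{\alpha+1}$ coming from $(1-s)^{\alpha+1}$, leaving the clean integrand $v^\alpha/(1-v)$ on $[0,r]$, which is exactly the integral appearing in the theorem.

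The main obstacle is identifying the right second substitution. The choice $v = (r-s)/(1-s)$ is not obvious a priori, but it is motivated by the observation that the numerator and denominator of $(r-s)^\alpha/(1-s)^{\alpha+1}$ differ in power by one, so a rational change of variable should produce the cancellations needed to eliminate the $(1-r)$ factors. Every other step -- the triangle inequality, the bound $|1-tz| \ge 1-tr$, and the first substitution $s = tr$ -- is routine. No sharpness assertion is claimed in the statement, so the proof terminates at this point.
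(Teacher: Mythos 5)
Your argument is sound, and it is essentially the standard one: note that the paper itself gives no proof of this statement (it is quoted verbatim from \cite{Kayponn20}), and the proof in that source runs exactly along your lines --- triangle inequality, $|1-tz|\ge 1-tr$, and a M\"obius change of variable (done there in a single step, $v=r(1-t)/(1-tr)$, where you use two; both are correct and give the same result).

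The one point you must repair is the final claim that you arrive at ``exactly the integral appearing in the theorem.'' You do not. Carrying out your own substitution $s=tr$ carefully, the prefactor that comes out is $r^{-(\alpha+1)}$, not $r^{-\alpha}$: the Jacobian $dt=ds/r$ contributes one power of $r$ and $(1-t)^\alpha=\big((r-s)/r\big)^\alpha$ contributes $\alpha$ more. Your chain of estimates therefore ends at
$$
|\mathcal{C}^\alpha f(z)|\le \frac{1+\alpha}{r^{\alpha+1}}\int_0^r\frac{v^\alpha}{1-v}\,dv ,
$$
whereas the displayed statement has $r^{\alpha}$ in the denominator. The statement as printed is in fact false: for $\alpha=0$ and $f\equiv 1$ one has $\mathcal{C}^0f(r)=\tfrac1r\log\tfrac1{1-r}$, which exceeds $\log\tfrac1{1-r}$ for every $r\in(0,1)$. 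The exponent $\alpha+1$ is the correct one; it is the only version consistent with the identity $\frac{1+\alpha}{r^{\alpha+1}}\int_0^r\frac{t^\alpha}{1-t}\,dt=(\alpha+1)\sum_{n\ge 0}\frac{r^n}{n+\alpha+1}$ invoked in Theorem \ref{7theorem3.4}, and with the stated sharp bound $\tfrac1r\log\tfrac1{1-r}$ for $T_1=\mathcal{C}^0$. So your proof establishes the (corrected) lemma; the gap is not in your computation but in asserting agreement with a misprinted statement --- you should have noticed the missing factor of $r$ and flagged the typo rather than declaring an exact match.
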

	Now, we are ready to give another important consequence of Theorem \ref{8theorem1.1}.
	\begin{theorem}\label{7theorem3.4}
		Let $f(z)=\sum_{k=0}^{\infty}a_k z^k$ belongs to $\mathcal{B}(\Omega_\gamma)$ and $\alpha>-1$. Then
		$$
		\sum_{n=0}^{\infty}\bigg(\frac{1}{A_{n}^{\alpha+1}} \sum_{k=0}^{n} A_{n-k}^{\alpha} |a_k| \bigg)r^n\leq \frac{1+\alpha}{r^{\alpha}}\int_{0}^{r} \frac{t^\alpha}{1-t} dt= (\alpha+1)\sum_{n=0}^{\infty} \frac{r^n}{n+\alpha+1},\ \text{ for all } \ r\leq R_\gamma(\alpha), 
		$$
		where $R_\gamma(\alpha)$ is the minimal positive root of the equation
		$$
		(3+\gamma)(1+\alpha)\sum_{n=0}^{\infty} \frac{x^n}{n+\alpha+1}=\frac{2}{1-x}.
		$$
		The number $R_\gamma(\alpha)$ cannot be replaced by a larger constant.
	\end{theorem}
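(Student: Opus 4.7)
The plan is to apply Theorem \ref{8theorem1.1} with $p=1$, following the same template that succeeded for the $\beta$-Ces\'{a}ro operator in the proof of Theorem \ref{8theorem3.1}. The first step is to interchange the order of summation in the double sum on the left, so as to cast it in the canonical form required by the master theorem. Setting
$$
\varphi_k(r) := \sum_{n=k}^{\infty} \frac{A_{n-k}^{\alpha}}{A_n^{\alpha+1}}\, r^n,
$$
the left-hand side of Theorem \ref{7theorem3.4} equals $\sum_{k=0}^{\infty} |a_k|\varphi_k(r)$, which is exactly the shape to which Theorem \ref{8theorem1.1} applies.

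Next I need closed expressions for the two quantities that enter the master theorem: $\varphi_0(r)$ and the total sum $\sum_{k\geq 0}\varphi_k(r)$. Using the Pochhammer presentation $A_n^\beta=(\beta+1)_n/n!$, the ratio $A_n^{\alpha}/A_n^{\alpha+1}$ reduces to $(\alpha+1)/(n+\alpha+1)$, which gives
$$
\varphi_0(r) = (\alpha+1)\sum_{n=0}^{\infty} \frac{r^n}{n+\alpha+1},
$$
matching precisely the majorant written on the right-hand side of the theorem. The total sum is even easier: inserting the normalisation $\frac{1}{A_n^{\alpha+1}}\sum_{k=0}^n A_{n-k}^{\alpha}=1$ recorded in the excerpt and exchanging the order of summation yields
$$
\sum_{k=0}^{\infty}\varphi_k(r) = \sum_{n=0}^{\infty}\frac{r^n}{A_n^{\alpha+1}}\sum_{k=0}^n A_{n-k}^{\alpha} = \sum_{n=0}^{\infty} r^n = \frac{1}{1-r}.
$$

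With these ingredients in hand, Theorem \ref{8theorem1.1} (with $p=1$) delivers $\sum_{k\geq 0}|a_k|\varphi_k(r)\leq \varphi_0(r)$ on every interval on which $(1+\gamma)\varphi_0(r) > 2\sum_{k\geq 1}\varphi_k(r)$. Using $\sum_{k\geq 1}\varphi_k(r) = \tfrac{1}{1-r} - \varphi_0(r)$, this condition rearranges to
$$
(3+\gamma)(\alpha+1)\sum_{n=0}^{\infty}\frac{r^n}{n+\alpha+1} > \frac{2}{1-r},
$$
which is exactly the defining inequality of $R_\gamma(\alpha)$. The sharpness clause of Theorem \ref{8theorem1.1}, applied to the extremal function $g_\gamma$ constructed in its proof, transfers immediately to the present setting, so no separate extremal argument is needed here. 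The only step that asks for any real thought is the Pochhammer simplification of $A_n^{\alpha}/A_n^{\alpha+1}$, together with the companion identification of $\sum_k \varphi_k(r)$ via the prenormalisation; once those two elementary reductions are in place, the master theorem does all of the analytic work, and I do not expect any genuine obstacle.
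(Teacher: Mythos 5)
Your proof is correct and follows essentially the same route as the paper: interchange the order of summation, set $\varphi_k(r)=\sum_{n\geq k} \frac{A_{n-k}^{\alpha}}{A_n^{\alpha+1}}\,r^n$, identify $\varphi_0(r)$ and $\sum_{k}\varphi_k(r)=\frac{1}{1-r}$, and apply Theorem \ref{8theorem1.1} with $p=1$. The only (harmless) difference is that you derive the two closed-form identities directly from the Pochhammer ratio and the normalisation $\frac{1}{A_n^{\alpha+1}}\sum_{k=0}^n A_{n-k}^{\alpha}=1$, whereas the paper simply cites them from \cite{Kayponn20}.
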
 
	\begin{proof} We have
		$$
		\sum_{n=0}^{\infty}\bigg(\frac{1}{A_{n}^{\alpha+1}} \sum_{k=0}^{n} A_{n-k}^{\alpha} |a_k| \bigg)r^n= \sum_{n=0}^{\infty}|a_k|\bigg(\sum_{k=n}^{\infty} \frac{A_{k-n}^{\alpha}}{A_{k}^{\alpha+1}} r^k \bigg).	
		$$
		By considering
		$$
		\varphi_n(r)= \sum_{k=n}^{\infty} \frac{A_{k-n}^{\alpha}}{A_{k}^{\alpha+1}} r^k.
		$$
		It has been obtained in \cite{Kayponn20} that
		$$
		\sum_{k=0}^{\infty} \varphi_k (r)= \frac{1}{1-r} \text{ and } \varphi_0 (r)= (1+\alpha)\sum_{k=0}^{\infty}\frac{r^k}{k+\alpha+1}, \,\ r\in[0,1).
		$$
		Thus our proof is concluded by Theorem \ref{8theorem1.1}. 
	\end{proof}
	
	It is important to mention here that for $\gamma=0$, Theorem \ref{7theorem3.4} reduces to \cite[Theorem 4]{Kayponn20}. Also, the value $\alpha=0$ provides a result of \cite{AG21}. Moreover, the choices $\gamma=0$ and $\alpha=0$ give a result of \cite{Kayponn19}. 
	\subsection{Remark} Let $m$ be a positive integer. The Bernardi operator 
	\cite[P. 11]{MM-Book} is defined by
	$$
	L_\delta [f](z):= \sum_{n=m}^{\infty} \frac{a_n}{n+\delta} z^n
	= \int_{0}^{1} f(zt) t^{\delta-1} dt,
	$$
	for $f(z)=\sum_{n=m}^{\infty} a_n z^n$ with $\delta > -m$.
	
	It is easy to calculate the following sharp bound
	$$
	|L_\delta [f](z)|\leq \frac{1}{m+\delta}r^m, \ |z|=r<1
	$$
	for $f(z)=\sum_{n= m}^{\infty} a_n z^n$.
	
	In Theorem \ref{8theorem1.1} the choice
	$$
	\varphi_n(r)= \frac{r^{n+m}}{n+m+\delta}
	$$
	gives a result of Allu and Ghosh (see \cite[Theorem 2.2]{AG21}). Further, it has been observed in \cite{AG21} that the value $\gamma=0$ leads to a known result of \cite{KS2021}.
	
	

	\bigskip
	\noindent
	{\bf Acknowledgement.} 
	I would like to thank my Ph.D. supervisor Dr. Swadesh Kumar Sahoo for his helpful remarks.
	The work of the author is supported by CSIR, New Delhi (Grant No: 09/1022(0034)/2017-EMR-I).

	\medskip
	\noindent
	{\bf Conflict of Interests.} The authors declare that there is no conflict of interests regarding the publication of this paper.

\end{document}